\def\veps{\varepsilon}
\def\vp{\varphi}
\def\eq#1{(\ref{#1})}
\def\nn{\nonumber}
\def\({\left(\begin{array}{cccccc}}
\def\){\end{array}\right)}
\def\eq#1{(\ref{#1})}
\def\nn{\nonumber}
\def\({\left(\begin{array}{cccccc}}
\def\){\end{array}\right)}
\def\bes{\begin{eqnarray}}
\def\ees{\end{eqnarray}}
\newcommand{\lea}{\lesssim}
\newcommand{\del}{\partial}
\newcommand{\beq}{\begin{equation}}
\newcommand{\eeq}{\end{equation}}
\newcommand{\bea}{\begin{eqnarray}}
\newcommand{\eea}{\end{eqnarray}}
\newcommand{\beann}{\begin{eqnarray*}}
\newcommand{\eeann}{\end{eqnarray*}}
\newcommand{\RR}{\mathbb{R}}
\newcommand{\EE}{\mathbb{E}}
\newcommand{\ti}{\tilde}
\newcommand{\bp}{\begin{proof}}
\newcommand{\ep}{\end{proof}}
\newtheorem{theorem}{Theorem}[section]
\newtheorem{proposition}[theorem]{Proposition}
\newtheorem{lemma}[theorem]{Lemma}
\newtheorem{remark}[theorem]{Remark}
\newtheorem{notation}{Notation}
\numberwithin{equation}{section}
\begin{document}

\title[Convergence of exterior solutions]
{On convergence of exterior solutions to radial Cauchy solutions for $\square_{1+3}U=0$}

\author[Helge Kristian Jenssen]{Helge Kristian Jenssen}\thanks{The work of 
Jenssen was supported in part by the National Science Foundation under Grant DMS-1311353}

\address[Helge Kristian Jenssen]{\newline
Department of Mathematics, Penn State University, University Park, State College, PA 16802, USA.
Email: jenssen@math.psu.edu}

\author[Charis Tsikkou]{Charis Tsikkou}\thanks{The work of Tsikkou was supported in part by 
the WVU ADVANCE Sponsorship Program}
\address[Charis Tsikkou]{\newline
Department of Mathematics, West Virginia University, Morgantown, WV 26506, USA.
Email: tsikkou@math.wvu.edu}

\date{\today}

\begin{abstract}
	Consider the Cauchy problem for the 3-d linear wave equation 
	$\square_{1+3}U=0$ with radial initial data $U(0,x)=\Phi(x)=\vp(|x|)$, 
	$U_t(0,x)=\Psi(x)=\psi(|x|)$. A standard result gives that $U$ belongs
	to $C([0,T];H^s(\RR^3))$ whenever  $(\Phi,\Psi)\in H^s\times H^{s-1}(\RR^3)$. 
	In this note we are interested in the question of how $U$ can be realized 
	as a limit of solutions to initial-boundary value problems on the 
	exterior of vanishing balls $B_\veps$ about the origin. We note that, as the solutions 
	we compare are defined on different domains, the answer is not an immediate 
	consequence of $H^s$ well-posedness for the wave equation.
	
	We show how explicit solution formulae yield convergence and optimal 
	regularity for the Cauchy solution via exterior solutions, when the latter are 
	extended continuously as constants on $B_\veps$ at each time. 
	We establish that for $s=2$ the solution $U$ can be realized 
	as an $H^2$-limit (uniformly in time) of exterior solutions on 
	$\RR^3\setminus B_\veps$ satisfying vanishing Neumann conditions
	along $|x|=\veps$, as $\veps\downarrow 0$. Similarly for $s=1$: $U$ is then an 
	$H^1$-limit of exterior solutions satisfying vanishing Dirichlet 
	conditions along $|x|=\veps$.   
\end{abstract}

\maketitle

Key words: Linear wave equation, Cauchy problem, radial solutions, exterior solutions, Neumann and Dirichlet 
conditions.

2010 Mathematics Subject Classification: 35L05, 35L15, 35L20.

\tableofcontents
\begin{notation}
	We use the notations $\RR^+ =(0,\infty)$ and $\RR_0^+=[0,\infty)$. 
	For function of time and spatial position, the time variable $t$ 
	is always listed first, and the spatial variable ($x$ or $r$) is listed last. 
	We indicate by subscript ``$rad$'' that the functions under consideration 
	are spherically symmetric, e.g.\ $H^2_{rad}(\RR^3)$ denotes the set 
	of $H^2(\RR^3)$-functions $\Phi$ with the property that 
	$\Phi(x)=\vp(|x|)$ for some function $\vp:\RR_0^+\to\RR$. For a radial 
	function we use the same symbol whether it is considered as a function 
	of $x$ or of $r=|x|$.
	
	Throughout we fix $T>0$ and $c>0$ and  set 
	\[\square_{1+1}:=\del_t^2-c^2\del_r^2,
	\qquad \square_{1+3}:=\del_t^2-c^2\Delta,\]
	where $\Delta$ is the 3-d Laplacian. 
	The open ball of radius $r$ about the origin in $\RR^3$ is denoted $B_r$.
	We write $\del_r$ for the directional derivative in the (outward) radial direction
        while $\del_i$ denotes $\del_{x_i}$. 	
	Finally, for two functions $A(t)$ and $B(t)$ we write 
        \[A(t)\lea B(t)\]
        to mean that there is a number $C$, possibly depending 
        on the time $T$, $c$, the fixed cutoff functions $\beta$ and $\chi$ (see 
        \eq{approx_neum_data_1}-\eq{approx_neum_data_2} and
        \eq{approx_dir_data_1}-\eq{approx_dir_data_2}), as well as 
        the initial data $\Phi$, $\Psi$, but independent of the vanishing radii 
        $\veps$, such that
        \[A(t)\leq C\cdot B(t)\qquad\text{holds for all $t\in [0,T]$.}\]
\end{notation}
\section{Radial Cauchy solutions as limits of exterior solutions}\label{rad_solns}
Consider the Cauchy problem for the 3-d linear wave equation 
with radial initial data:
\[\text{(CP)}\qquad
\left\{\begin{array}{ll}
	\square_{1+3}U=0 & \text{on $(0,T)\times\RR^3$}\\
	U(0,x)=\Phi(x) & \text{on $\RR^3$}\\
	U_t(0,x)=\Psi(x) & \text{on $\RR^3$,}
\end{array} \right.\]
where 
\[\Phi\in H^s_{rad}(\RR^3), \qquad \Psi\in H^{s-1}_{rad}(\RR^3),\]
with 
\beq\label{rad_versns}
	\Phi(x)=\vp(|x|)\qquad \Psi(x)=\psi(|x|).
\eeq
Throughout we refer to the unique solution $U$ of (CP) as the {\em Cauchy 
solution}. 

In this work  we consider how the radial Cauchy solution $U$ can be realized 
as a limit of solutions to initial-boundary value problems posed on the
exterior of vanishing balls $B_\veps$ ($\veps\downarrow 0$) about the origin. 
The precise issue will be formulated below.
We shall consider exterior solutions satisfying either a vanishing Neumann 
condition or a vanishing Dirichlet condition along $|x|=\veps$.

It is well known that the Sobolev spaces $H^s$ provide a natural 
setting for the Cauchy problem for the wave equation; see \cite{ra} and 
\eq{opt_reg_1}-\eq{opt_reg_2} below. 
The choice of space dimension $3$ is for convenience: it is particularly
easy to generate radial solutions in this case. Next, both the choice of 
spaces for the initial data for (CP), as well as the boundary condition 
imposed on the exterior solutions, will influence the convergence of
exterior solutions toward the Cauchy solution. 
For the wave equation in $\RR^3$ the different convergence behavior of
exterior Neumann and exterior Dirichlet solutions is brought out by 
considering $H^2$ vs.\ $H^1$ initial data; see Remark \ref{1st_rmk} below.

The scheme of generating radial solutions to Cauchy problems as limits of 
exterior solutions has been applied to a variety of evolutionary PDE problems; 
see \cite{jt1} for references and discussion. 
In our earlier work \cite{jt1} we used the 3-d wave equation to gauge
the effectiveness of this general scheme in a case where 
``everything is known.'' In order that the results be relevant
to other (possibly nonlinear) problems, the analysis in \cite{jt1} deliberately 
avoided any use of explicit solution formulae. Based on energy arguments 
and strong convergence alone, it was found that the exterior solutions do
converge to the Cauchy solution as the balls vanish. However,
the arguments did not yield optimal information about the regularity of the
limiting Cauchy solution. Specifically,
for $s=2$ we obtained the Cauchy solution as a limit only in 
$H^1$ (via exterior Neumann solutions) or in $L^2$ (via exterior Dirichlet
solutions). This is strictly less regularity than what is known to be the case, see 
\eq{opt_reg_2}.  

Thus, in general, while limits of exterior solutions to evolutionary PDEs 
may be used to establish existence for radial Cauchy problems, one should 
not expect optimal regularity information about the Cauchy solution via this 
approach. 

On the other hand, for the particular case of the 3-d wave equation 
with radial data, it is natural to ask what type of convergence we can
establish if we exploit solution formulae (for the Cauchy 
solution as well as for the exterior solutions). The present work 
addresses this question, and our findings are summarized 
in Theorem \ref{main_result} below.

We stress that while \cite{jt1} dealt with the issue of using exterior 
solutions as a stand-alone method for obtaining existence of radial
Cauchy problems, the setting for the present work is different. 
We are now exploiting what is known about the solution of the
Cauchy solutions as well as exterior solutions for the radial 3-d 
linear wave equation, and the only issue is how the former 
solutions are approximated by the latter.

\begin{remark}\label{1st_rmk}
	Before starting the detailed analysis we comment on a
	slightly subtle point. As recorded in our main result (Theorem 
	\ref{main_result}), we 
	establish $H^2$-regularity of the limiting Cauchy solution $U$
	when the initial data $(\Phi,\Psi)$ belong to $H^2\times H^1$, 
	and $H^1$-regularity when the data belong to $H^1\times L^2$. 
	This is as it should be according to 
	\eq{opt_reg_1}. Now, in the former case $U$
	is obtained as a limit of exterior Neumann solutions, while in the 
	latter case it is obtained as a limit of exterior Dirichlet solutions.
	This raises a natural question: what regularity is obtained for 
	$U$ in the case of $H^2\times H^1$-data, if we insist on 
	approximating by exterior Dirichlet solutions?
	
	To answer this we need to specify how we compare the 
	everywhere defined Cauchy solution $U$ to exterior solutions 
	$U^\veps$, which are defined only on the exterior domains 
	$\RR^3_\veps:=\RR^3\setminus B_\veps$. 
	There are at least two ways to do this\footnote{When using 
	exterior solutions to establish existence for (CP) (as in \cite{jt1}), 
	there is no such choice: one must produce approximations to 
	$U$ that are everywhere defined.}:
	\begin{itemize}
		\item[(a)] by calculating $\|U(t)-U^\veps(t)\|_{H^s(\RR^3_\veps)}$;
		\item[(b)] by first defining a suitable extension $\tilde U^\veps$ of $U^\veps$
		to all of $\RR^3$, and then calculating 
		$\|U(t)-\tilde U^\veps(t)\|_{H^s(\RR^3)}$.
	\end{itemize}
	With {\em (b)}, which is what we do in this paper, the natural choice 
	is to extend $U^\veps(t)$ continuously as a constant on $B_\veps$ 
	at each time. I.e., for  
	exterior Dirichlet solutions, we let $\ti U^\veps(t,x)$ vanish identically on 
	$B_\veps$, while for exterior Neumann solutions its value there 
	is that of $U^\veps(t,x)$ along the $|x|=\veps$.
	
	It turns out that regardless of whether we use  {\em (a)} or {\em (b)} to compare 
	the Cauchy solution to the exterior solutions, the answer to the question 
	above is that we obtain only $H^1$-convergence when 
	exterior Dirichlet solutions are used. In fact, for {\em (b)} this is immediate: 
	the exterior Dirichlet solution $\tilde U^\veps$ will typically have a nonzero 
	radial derivative at $r=\veps+$ so that its extension $\tilde U^\veps$ 
	contains a ``kink'' along $|x|=\veps$. Thus, second derivatives of 
	$\tilde U^\veps$ will typically contain a $\delta$-function along $|x|=\veps$, 
	and $\tilde U^\veps$ does not even belong to $H^2(\RR^3)$ in this case.
	For {\em (a)} it suffices to consider the situation at time zero. 
	With $\Phi$ as above we consider smooth cutoffs $\Phi^\veps$ 
	(see \eq{approx_dir_data_1} below). 
	A careful calculation, carried out in \cite{jt1}, shows that 
	$\|\Phi-\Phi^\veps\|_{H^2(\RR^3_\veps)}$ blows up as 
	$\veps\downarrow 0$.
	
	These remarks highlight the unsurprising but relevant fact that 
	exterior Dirichlet  solutions are more singular than exterior Neumann 
	solutions; see \cite{jt1} for a discussion.
\end{remark}

The goal is to show that the Cauchy solution $U$ of (CP) can be approximated,
uniformly on compact time intervals, in $H^2$-norm by suitably chosen 
exterior Neumann solutions and in $H^1$-norm by 
suitably chosen exterior Dirichlet solutions.

As indicated we shall use explicit solution formulae for both the Cauchy 
problem (CP) as well as for the exterior Neumann and Dirichlet problems.
These formulae for radial solutions are readily available in 3 dimensions
and exploits the fact that radial solutions of $\square_{1+3}U=0$ 
admit the representation 
\[U(t,x)=\frac{u(t,|x|)}{|x|}\]
where $u(t,r)$ solves $\square_{1+1}u=0$ on the half-line $\RR^+$.
(Exterior Neumann solutions require a little work to write down
explicitly; see \eq{U_eps}.) 

Of course, with the explicit formulae in place, it is a matter of 
computation to analyze the required norm differences. However, 
it is a rather involved computation since the formulae involve different 
expressions in several different regions. 
Also, the answers do not follow by appealing to well-posedness
for the wave equation (see \eq{opt_reg_2} below): the Cauchy solution 
and the exterior solutions are defined on different domains. 
As noted above we opt to extend the exterior solutions to the interior 
of the balls $B_\veps$, before comparing them to the Cauchy solution. 

Instead of a direct comparison we prefer to estimate the 
$H^2$- and $H^1$-differences in question by employing the 
natural energies for the wave equation. These energies will majorize 
the $L^2$-distances of the first and second derivatives, and will 
also provide an estimate on the $L^2$-distance of the functions
themselves.

There are two advantages of this approach: first, it is straightforward 
to calculate the exact rates of change of the energies in question, and second, 
these rates depend only on what takes place at or within radius $r=\veps$.
The upshot is that it suffices to analyze fewer terms than required by
a direct approach. Finally, to estimate the rates of change of the relevant 
energies we make use of the explicit solution formulae.

\section{Setup and statement of main result}
\subsection{The Cauchy solution}
A standard result (see e.g.\ \cites{bjs,ra}) shows that the radial Cauchy solution $U$ 
of (CP) may be calculated explicitly by using the representation 
\[U(t,x)=\frac{u(t,|x|)}{|x|},\]
where $u(t,r)$ solves the half-line problem
\beq\label{half_line_dir}
	\text{(Half-line)}\qquad
	\left\{\begin{array}{ll}
	\square_{1+1}u =0 & \text{on $(0,T)\times\RR^+$}\\
	u(0,r)=r\vp(r) & \text{for $r\in\RR^+$}\\
	u_t(0,r)=r\psi(r) & \text{for $r\in\RR^+$}\\
	u(t,0)\equiv 0 & \text{for $t>0$,}\end{array} \right.
\eeq
where $\vp$ and $\psi$ are as in \eq{rad_versns}.
By using the d'Alembert formula for the half-line problem (see \cite{bjs}) 
we obtain that
\beq\label{U}
	U(t,r)=\left\{
	\begin{array}{ll}
		0\leq r\leq ct: & \frac{1}{2r}\left[ (ct+r) \vp(ct+r)-(ct-r)\vp(ct-r)\right]\\
					& +\frac{1}{2cr}\int_{ct-r}^{ct+r} s\psi(s)\, ds\\\\
		r\geq ct: & \frac{1}{2r}\left[ (r+ct) \vp(r+ct)+(r-ct)\vp(r-ct)\right]\\
					& +\frac{1}{2cr}\int_{r-ct}^{r+ct} s\psi(s)\, ds.
	\end{array}
	\right.
\eeq
In addition to the solution formula \eq{U} we shall also exploit the 
following well-known stability property \cites{ra,sel}: 
with data $\Phi\in H^s(\RR^3)$ and $\Psi\in H^{s-1}(\RR^3)$ 
(radial or not), the Cauchy problem (CP) admits a unique solution 
$U$ which satisfies
\beq\label{opt_reg_1}
	U\in C([0,T];H^s(\RR^3))\cap C^1([0,T];H^{s-1}(\RR^3))
\eeq
and 
\beq\label{opt_reg_2}
	\|U(t)\|_{H^s(\RR^3)}+\|U_t(t)\|_{H^{s-1}(\RR^3)}
	\leq C_T\left( \|\Phi\|_{H^s(\RR^3)}+\|\Psi\|_{H^{s-1}(\RR^3)}\right),
\eeq
for each $T>0$, where $C_T$ is a number of the form 
$C_T=\bar C\cdot(1+T)$, and $\bar C$ a universal constant.

\subsection{Exterior solutions and their extensions}\label{ext_solns}
With $\Phi\in H^s_{rad}(\RR^3)$ and $\Psi\in H^{s-1}_{rad}(\RR^3)$, $s=1$ 
or $2$, the goal is to show that the  solution $U$ of (CP) can be ``realized as a limit of 
exterior solutions'' defined outside of $B^\veps$ as $\veps\downarrow 0$. 
To make this precise we need to specify:
\begin{enumerate}
	\item precisely which exterior solutions we consider: which boundary 
	conditions do they satisfy along $\del B_\veps$, and how are their 
	initial data related to the given Cauchy data $\Phi$, $\Psi$;
	\item how we compare the everywhere defined Cauchy solution $U$ 
	with exterior solutions $U^\veps$, that are defined only outside of 
	$B_\veps$; and
	\item which norm we use for comparing $U$ and $U^\veps$.
\end{enumerate}
Concerning (1) we shall consider exterior solutions that satisfy either 
vanishing Neumann or vanishing Dirichlet conditions along $\del B^\veps$. 
In either case, the initial data for the exterior problem are generated 
by a two-step procedure: we first approximate the original 
Cauchy data by $C^\infty_{c,rad}(\RR^3)$-functions, 
and then apply an appropriate modification of these 
smooth approximations near the origin. These  modifications
use smooth cut-off functions and are made so that the result 
satisfies vanishing Neumann or Dirichlet conditions along $|x|=\veps$. 
(See \eq{approx_neum_data_1}-\eq{approx_neum_data_2} and
\eq{approx_dir_data_1}-\eq{approx_dir_data_2} for details.)
In either case we denote the exterior, radial solutions corresponding to 
the approximate, smooth data by $U^\veps(t,x)\equiv U^\veps(t,r)$; 
they are given explicitly in \eq{U_eps} and \eq{U_eps_dir} below.

As mentioned in Remark \ref{1st_rmk}, for (2) we opt to compare 
the Cauchy solution $U$ to the natural extensions $\ti U^\veps$ of 
the smooth exterior solution $U^\veps$: at each time $t$, 
$\ti U^\veps(t,x)$ takes the constant value $U^\veps(t,\veps)$ on 
$B_\veps$, and coincides with $U^\veps(t,x)$ for $|x|\geq \veps$. 
Thus, in the case of Dirichlet data, $\ti U^\veps(t,x)$ vanishes 
identically on $B_\veps$, while for Neumann data its value there 
is that of $U^\veps(t,x)$ along the boundary $|x|=\veps$. 

Finally, concerning (3), Remark \ref{1st_rmk} above also explains  
the choice of $H^2$-norm for comparing the Cauchy solution $U$ 
to exterior Neumann solutions, and $H^1$-norm for comparison 
to exterior Dirichlet solutions. 

Our main result is as follows.

\begin{theorem}\label{main_result}
	Let $T>0$ be given and let $U$ denote the solution of the 
	radial Cauchy problem $\mathrm{(CP)}$ for the linear wave 
	equation in three space dimensions with initial data $(\Phi,\Psi)$.
	\begin{itemize}
		\item[(i)] For initial data in $H^2_{rad}(\RR^3)\times H^1_{rad}(\RR^3)$ the 
		Cauchy solution $U$ can be realized as a $C([0,T];H^2(\RR^3))$-limit 
		of suitable extended exterior Neumann solutions as 
		$\veps\downarrow 0$.
		\item[(ii)] For initial data in $H^1(\RR^3)\times L^2(\RR^3)$ the 
		Cauchy solution $U$ can be realized as a $C([0,T];H^1(\RR^3))$-limit 
		of suitable extended exterior Dirichlet solutions as 
		$\veps\downarrow 0$.
	\end{itemize}
\end{theorem}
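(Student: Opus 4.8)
The plan is to bound, uniformly for $t\in[0,T]$, the quantity $\|U(t)-\ti U^\veps(t)\|_{H^s(\RR^3)}$ --- with $s=2$ in the Neumann case and $s=1$ in the Dirichlet case --- and to show it tends to $0$ as $\veps\downarrow 0$. First I would dispose of the error coming from replacing the given data $(\Phi,\Psi)$ by the smooth $C^\infty_{c,rad}(\RR^3)$-approximations used to build the exterior data in \eq{approx_neum_data_1}--\eq{approx_neum_data_2} and \eq{approx_dir_data_1}--\eq{approx_dir_data_2}: by the stability estimate \eq{opt_reg_2}, the Cauchy solutions attached to nearby data stay close in $C([0,T];H^s)$, so it suffices to treat data that are already smooth and compactly supported. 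For such data I split the norm as
\[
\|U-\ti U^\veps\|_{H^s(\RR^3)}^2=\|U-\ti U^\veps\|_{H^s(B_\veps)}^2+\|U-U^\veps\|_{H^s(\RR^3_\veps)}^2 .
\]
The interior piece is handled directly: on the shrinking ball $B_\veps$ the extension $\ti U^\veps$ is spatially constant (Neumann) or identically zero (Dirichlet), while $U\in H^s$ has $H^s(B_\veps)$-mass tending to $0$ by absolute continuity of the integral. So only the exterior piece is substantial.

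On $\RR^3_\veps$ the difference $W:=U-U^\veps$ solves $\square_{1+3}W=0$, and here I would use the natural wave energies rather than estimate derivatives one by one. For the Dirichlet case take $\mathcal E(t)=\tfrac12\int_{\RR^3_\veps}\big(|W_t|^2+c^2|\nabla W|^2\big)\,dx$, which controls $\|\nabla W(t)\|_{L^2}^2$ and $\|W_t(t)\|_{L^2}^2$; for the Neumann case I would form the corresponding second-order energy by summing this expression for $W$ and for $\del_t W$ (trading $\del_t^2$ for $c^2\Delta$ via the equation) so as to dominate the full $H^2$-seminorm. Multiplying the equation by $W_t$ (resp.\ by $\del_t W_t$) and integrating by parts, all bulk terms cancel and one is left with
\[
\frac{d}{dt}\Big(\tfrac12\!\int_{\RR^3_\veps}\!|W_t|^2+c^2|\nabla W|^2\,dx\Big)=-c^2\!\int_{\del B_\veps} W_t\,\del_\nu W\,dS ,
\]
where $\nu$ is the outward normal of $\RR^3_\veps$. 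For radial solutions this collapses to a single product of boundary traces at $r=\veps$ weighted by $\veps^2$, and the imposed condition kills one factor --- $\del_r U^\veps(t,\veps)=0$ in the Neumann case, $U^\veps(t,\veps)=0$ in the Dirichlet case. Adding a zeroth-order term $\tfrac12\int|W|^2$ produces only a bulk contribution bounded by $\mathcal E$ itself, so a Gronwall argument reduces everything to controlling $\mathcal E(0)$ and the integrated boundary flux.

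The crux is estimating those boundary traces via the explicit formulae \eq{U}, \eq{U_eps}, \eq{U_eps_dir}. The initial energy $\mathcal E(0)$ is small because the exterior data agree with the smooth Cauchy data away from an $O(\veps)$-thick shell near $\del B_\veps$ and meet the prescribed boundary condition on it, so the cut-off modification contributes only $o(1)$ in $H^s$. For the flux I would exploit the representation $U=u/r$ with $u$ solving the half-line problem \eq{half_line_dir}, whose Dirichlet condition $u(t,0)\equiv 0$ forces $u(t,r)=O(r)$ near the origin; differentiating \eq{U} then yields finite limits for $U_t(t,\veps)$ and $\del_r U(t,\veps)$, so the $\veps^2$-weighted flux integrates to $o(1)$ on $[0,T]$. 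The same expansion exposes the asymmetry recorded in Remark \ref{1st_rmk}: the Neumann extension keeps the radial first derivative continuous across $r=\veps$, which lets the second-order energy be closed, whereas the Dirichlet extension leaves a kink there, so only the first-order energy is available --- exactly the $H^2$ versus $H^1$ dichotomy in the statement.

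I expect the main obstacle to be the second-order (Neumann, $H^2$) flux estimate. Near $r=\veps$ the factor $1/r$ in $U=u/r$ is most singular, and one must verify that the traces of the second derivatives of both $U$ and the reflected Neumann solution $U^\veps$ of \eq{U_eps} are controlled well enough --- using only $\Phi\in H^2_{rad}$ and $\Psi\in H^1_{rad}$ --- that the higher-order flux still vanishes. Writing $U^\veps$ explicitly is itself delicate here, since the vanishing Neumann condition $\del_r U^\veps(t,\veps)=0$ translates into the Robin-type condition $\veps\,u^\veps_r(t,\veps)=u^\veps(t,\veps)$ for the one-dimensional profile; tracking the resulting reflected waves through the several spatial regions, and bounding their derivative traces uniformly in $\veps$, is the most involved part of the computation.
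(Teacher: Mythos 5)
Your overall architecture coincides with the paper's: reduce to $C^\infty_{c,rad}$ data via the stability estimate \eq{opt_reg_2} (this is exactly Proposition \ref{smooth_case}), replace direct norm computations by wave energies whose time derivative is a pure boundary flux on $|x|=\veps$, use the imposed boundary condition to kill one trace factor, and evaluate the remaining traces from the explicit formulae \eq{U}, \eq{U_eps}, \eq{U_eps_dir}. Your two variants --- splitting $\RR^3$ into $B_\veps$ and $\RR^3_\veps$ instead of estimating the global energies of $U-\ti U^\veps$ as in \eq{1_st_energy_diff}, and building the second-order energy from $\del_t W$ rather than from the spatial derivatives $\del_i W$ as in \eq{2_nd_energy_diff} --- are workable, but the second one has a hidden cost you do not address: on the exterior domain, recovering the Hessian from the Laplacian is not free. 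For radial $W$ with decay one has $\|D^2W\|^2_{L^2(\RR^3_\veps)}=\|\Delta W\|^2_{L^2(\RR^3_\veps)}+8\pi\veps\,|\del_r W(t,\veps)|^2$, so trading $W_{tt}$ for $c^2\Delta W$ produces yet another boundary-trace term that must be estimated (the paper's choice of $\EE^\veps_W=\sum_i\mathcal E^\veps_{\del_i W}$ controls the full Hessian directly and avoids this).

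The genuine gap is at the crux, which you explicitly defer (``the most involved part''): the flux estimates themselves, and your stated heuristics for them would not close the argument. Each flux is $\mathrm{area}(\del B_\veps)\sim\veps^2$ times a product of a trace of $U$ and a trace of $U^\veps$, and the $U^\veps$-traces cannot be bounded ``uniformly in $\veps$'' as you hope: they necessarily blow up, with $|U^\veps_t(t,\veps)|\lea\veps^{-1}$ \eq{U^veps_t_bound}, $|\del_rU^\veps(t,\veps)|\lea\veps^{-1}$ \eq{U^eps_r_bound}, and $|\del_{rr}U^\veps(t,\veps)|\lea\veps^{-2}$ \eq{U^eps_rr_bound}; even obtaining these rates requires taming the exponential reflected-wave integral in \eq{U_eps} (the $Q^\veps_k$ estimate \eq{Q}), which your sketch never confronts. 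Correspondingly, ``finite limits'' for the $U$-traces are not enough. For part (ii) your plan nearly works (bounded $U_t(t,\veps)$ against the $\veps^{-1}$ rate of $\del_rU^\veps$ gives an $O(\veps)$ flux), but for the second-order Neumann flux in \eq{2_nd_energy_diff} mere boundedness of $\del_rU_t(t,\veps)$ yields only $\veps^2\cdot O(1)\cdot O(\veps^{-2})=O(1)$, which does not vanish. What the paper actually needs, and proves, is the quantitative smallness $|\del_rU_t(t,\veps)|\lea\veps$ \eq{U^_rt_bound}, obtained by second-order Taylor expansion using $\vp'(0)=\psi'(0)=0$ for the smooth reduced data --- not a consequence of $u(t,r)=O(r)$ alone. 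So part (i), which you correctly single out as the main obstacle, is precisely where your proposal contains no argument: the proof lives or dies on pairing an $O(\veps)$ bound for the Cauchy-solution traces against the precise $O(\veps^{-1})$, $O(\veps^{-2})$ blow-up rates of the exterior-solution traces, and neither side of that pairing is established in your sketch.
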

We point out that, e.g.\ in part (i), we do not claim that the 
extended Neumann solutions $\tilde U^\veps$ converge to $U$ 
in $H^2$-norm. In fact, we establish this latter property only for 
the case with $C^\infty_c(\RR^3)$ initial data. However, thanks 
to the stability property \eq{opt_reg_2}, this is sufficient to obtain 
(i); see Proposition \ref{smooth_case} below.

The rest of the paper is organized as follows. After reducing to 
the case with smooth and compactly supported data in Section 
\ref{smooth_c}, we treat $H^2$-convergence of exterior Neumann 
solutions in Sections \ref{gen_neu_soln}-\ref{comp_neu}, while
$H^1$-convergence of exterior Dirichlet solutions is established 
in Sections \ref{gen_dir_soln}-\ref{comp_dir}.

\subsection{Reduction to smooth case}\label{smooth_c}
The first step of the proof is to use well-posedness \eq{opt_reg_2} 
for the Cauchy problem to reduce to the case of smooth initial data. 
\begin{proposition}\label{smooth_case}
	With the setup in Theorem \ref{main_result}, 
	let $\tilde U^{N,\veps}$ and $\tilde U^{D,\veps}$ denote the 
	extensions of the exterior Neumann and Dirichlet solutions, 
	respectively, as described in Section \ref{ext_solns}.
	Then, Theorem \ref{main_result} follows once it is established that 
	\beq\label{smooth_case1}
		\sup_{0\leq t\leq T}\|U(t)- \tilde U^{N,\veps}(t)\|_{H^2(\RR^3)}\to 0
		\qquad\text{as $\veps\downarrow 0$}
	\eeq
	and  
	\beq\label{smooth_case2}
		\sup_{0\leq t\leq T}\|U(t)- \tilde U^{D,\veps}(t)\|_{H^1(\RR^3)}\to 0
		\qquad\text{as $\veps\downarrow 0$,}
	\eeq
	for any initial data $\Phi,\Psi\in C_{c,rad}^\infty(\RR^3)$.
\end{proposition}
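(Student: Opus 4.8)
The plan is to deduce the general-data claims of Theorem \ref{main_result} from the smooth-data limits \eq{smooth_case1}--\eq{smooth_case2} by a density-plus-diagonal argument, exploiting only the linearity of (CP) and the Cauchy stability estimate \eq{opt_reg_2}. The decisive observation is that the word ``suitable'' in Theorem \ref{main_result} grants full freedom in choosing the smooth approximations that generate the exterior data; consequently I do not need to prove that $\tilde U^{N,\veps}\to U$ for fixed general data as $\veps\downarrow 0$, but only to exhibit \emph{one} sequence of extended exterior solutions converging to $U$. This is what lets me avoid proving any stability estimate for the exterior problems that is uniform in $\veps$ — a point consistent with the remark following the theorem that \eq{opt_reg_2} alone suffices.

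I would treat part (i) in detail; part (ii) is identical with $s=1$ and the pair $(H^1,L^2)$ and comparison in $H^1$ in place of $s=2$, $(H^2,H^1)$, and $H^2$. First I invoke density of $C^\infty_{c,rad}(\RR^3)$ in $H^s_{rad}(\RR^3)$ (obtained by radial mollification followed by a radial cutoff, both of which preserve spherical symmetry) to select $\Phi_k,\Psi_k\in C^\infty_{c,rad}(\RR^3)$ with $\Phi_k\to\Phi$ in $H^2(\RR^3)$ and $\Psi_k\to\Psi$ in $H^1(\RR^3)$. Let $U_k$ be the Cauchy solution with data $(\Phi_k,\Psi_k)$. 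Since (CP) is linear, $U-U_k$ solves (CP) with data $(\Phi-\Phi_k,\Psi-\Psi_k)$, so \eq{opt_reg_2} (with $s=2$) yields
\[
	\sup_{0\le t\le T}\|U(t)-U_k(t)\|_{H^2(\RR^3)}
	\le C_T\big(\|\Phi-\Phi_k\|_{H^2(\RR^3)}+\|\Psi-\Psi_k\|_{H^1(\RR^3)}\big),
\]
and the right-hand side tends to $0$ as $k\to\infty$.

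Next, for each fixed $k$ the data $(\Phi_k,\Psi_k)$ is smooth and compactly supported, so the hypothesis \eq{smooth_case1} applies to the extended exterior Neumann solutions $\tilde U^{N,\veps}_k$ built from $(\Phi_k,\Psi_k)$, giving $\sup_{0\le t\le T}\|U_k(t)-\tilde U^{N,\veps}_k(t)\|_{H^2(\RR^3)}\to 0$ as $\veps\downarrow 0$. Hence for each $k$ I can choose $\veps_k>0$, with $\veps_k\downarrow 0$, so that this supremum is below $1/k$; writing $\tilde U^{N,\veps_k}:=\tilde U^{N,\veps_k}_k$, the triangle inequality gives
\[
	\sup_{0\le t\le T}\|U(t)-\tilde U^{N,\veps_k}(t)\|_{H^2(\RR^3)}
	\le \sup_{0\le t\le T}\|U(t)-U_k(t)\|_{H^2(\RR^3)}+\tfrac1k,
\]
whose right-hand side tends to $0$. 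Since each $\tilde U^{N,\veps_k}$ lies in $C([0,T];H^2(\RR^3))$ and $U$ does as well by \eq{opt_reg_1}, this exhibits $U$ as a $C([0,T];H^2(\RR^3))$-limit of extended exterior Neumann solutions, which is part (i); the same scheme with the exterior Dirichlet solutions and \eq{smooth_case2} delivers part (ii).

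All the genuine content is carried by the hypotheses \eq{smooth_case1}--\eq{smooth_case2}, so the ``main obstacle'' lies not in this reduction but in the smooth case treated in the later sections; here the only points requiring care are the radiality of the density approximation and the consistency of the regularity pairings. Concerning the latter, the triangle inequality above is meaningful precisely because the extended Neumann solutions genuinely belong to $H^2(\RR^3)$ — the vanishing Neumann condition matches the zero radial derivative of the constant extension, so no kink forms along $|x|=\veps$ — in accordance with Remark \ref{1st_rmk}; the analogous Dirichlet extension yields only $H^1$ rather than $H^2$ regularity, which is exactly why part (ii) is framed in $H^1$.
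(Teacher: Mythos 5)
Your proposal is correct and takes essentially the same route as the paper's own proof: approximate the data by $C^\infty_{c,rad}(\RR^3)$ functions, control $\sup_{0\le t\le T}\|U(t)-U_k(t)\|_{H^2}$ via linearity of (CP) and the stability estimate \eq{opt_reg_2}, then combine with the smooth-data hypothesis through a triangle inequality. The only difference is cosmetic: you package the argument as a diagonal sequence $\veps_k\downarrow 0$, whereas the paper fixes $\delta>0$ and selects a single sufficiently small $\veps$.
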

\begin{proof}
	For concreteness consider the case of exterior 
	Neumann solutions, and let arbitrary  data $\Phi\in H^2_{rad}(\RR^3)$, 
	$\Psi\in H^1_{rad}(\RR^3)$ be given. Fix any $\delta>0$. 
	We first choose $\Phi_0$, $\Psi_0$ in $C_{c,rad}^\infty(\RR^3)$ with
	\[\|\Phi-\Phi_0\|_{H^2}+ \|\Psi-\Psi_0\|_{H^1}< \frac{\delta}{2C_T},\]
	where $C_T$ is as in \eq{opt_reg_2}.
	The existence of such $\Phi_0$, $\Psi_0$ may be established in a 
	standard manner via convolution (using a radial mollifier) 
	and smooth cutoff at large radii. Let $U_0$ denote the solution of 
	(CP) with data $\Phi_0$, $\Psi_0$. Also, for any $\veps>0$ 
	let $\tilde U^{N,\veps}_0(t,x)$ denote the extension of the exterior Neumann 
	solution with data $\Phi_0^\veps$, $\Psi_0^\veps$, as described in 
	Section \ref{ext_solns}. 
	Then, assuming that \eq{smooth_case1} has been 
	established, we can choose $\veps>0$ 
	sufficiently small to guarantee that 
	\[\sup_{0\leq t\leq T}\|U_0(t)- \tilde U^{N,\veps}_0(t)\|_{H^2}<\frac{\delta}{2}.\]
	Hence, for any $t\in[0,T]$ we have
	\begin{align*}
		\|U(t)- \tilde U^{N,\veps}_0(t)\|_{H^2}
		&\leq \|U(t)- U_0(t)\|_{H^2}+\|U_0(t)- \tilde U^{N,\veps}_0(t)\|_{H^2}\\
		&\overset{\eq{opt_reg_2}}{\leq} 
		C_T\left(\|\Phi-\Phi_0\|_{H^2}+ \|\Psi-\Psi_0\|_{H^1}\right)+\frac{\delta}{2}
		<\delta,
	\end{align*}
	by the choice of $\Phi_0$, $\Psi_0$. 
\end{proof}
From now on we therefore consider an arbitrary but fixed pair of 
functions $\Phi,\, \Psi\in C^\infty_{c,\, rad}(\RR^3)$.
Note that we then have that the functions $\vp$ and $\psi$
in \eq{rad_versns} are smooth on $\RR_0^+$ and satisfy $\vp'(0+)=\psi'(0+)=0$.

\section{Exterior Neumann solutions}\label{gen_neu_soln}
In this section and the next we consider the case of exterior Neumann 
solutions.
For the fixed initial data $\Phi,\, \Psi\in C^\infty_{c,\, rad}(\RR^3)$ and 
any $\veps>0$ we derive a formula for $U^\veps(t,x)\equiv U^{N,\veps}(t,x)$, 
defined for 
$|x|\geq\veps$ and satisfying $\del_rU^\veps|_{r=\veps}=0$.  We refer 
to $U^\veps$ as the {\em exterior Neumann solution} corresponding 
to the solution $U$ of (CP) with data $\Phi$, $\Psi$. In Section 
\ref{comp_neu} we will then estimate how it (really, its extension
$\tilde U^\veps(t,x)$ to all of $\RR^3$) approximates the solution 
$U(t)$ in $H^2(\RR^3)$ at fixed times.

To generate the exterior Neumann solution $U^\veps$ we 
fix a smooth, nondecreasing function $\beta:\RR_0^+\to\RR_0^+$ with
\beq\label{beta_props_1}
	\beta\equiv 1 \quad\text{on $[0,1]$,}\quad \beta(s)=s\quad\text{for $s\geq 2$.}
\eeq
Then, with $\vp$ and $\psi$ as in \eq{rad_versns}, we define
\beq\label{approx_neum_data_1}
	\Phi^\veps(x)\equiv \vp^\veps(|x|):=\vp\big(\veps\beta
	\big(\textstyle\frac{|x|}{\veps}\big)\big)
\eeq
and
\beq\label{approx_neum_data_2}
	\Psi^\veps(x)\equiv \psi^\veps(|x|):=\psi\big(\veps\beta
	\big(\textstyle\frac{|x|}{\veps}\big)\big).
\eeq
We refer to $(\Phi^\veps,\Psi^\veps)$ as the {\em Neumann data} 
corresponding to the original Cauchy data $(\Phi,\Psi)$ for (CP).
Note that the Neumann data are actually defined on all of $\RR^3$, 
that they are constant (equal to $\vp(\veps)$ and $\psi(\veps)$, 
respectively) on $B_\veps$, and that their restrictions to the 
exterior domain $\{x\in\RR^3\,:\, |x|\geq\veps\}$  
satisfy homogeneous Neumann conditions along $|x|=\veps$.

The exterior Neumann solution $U^\veps$ is now defined as 
the unique radial solution of the initial-boundary value problem
\[\left\{\begin{array}{ll}
	\square_{1+3}V =0 & \text{on $(0,T)\times\{|x|>\veps\}$}\\
	V(0,x)=\Phi^\veps(x) & \text{for $|x|>\veps$}\\
	V_t(0,x)=\Psi^\veps(x) & \text{for $|x|>\veps$}\\
	\del_r V(t,x)=0 & \text{along $|x|=\veps$ for $t>0$.}
\end{array} \right.\]

To obtain a formula for $U^\veps$ we exploit the fact that $V$ is a radial 
solution of the 3-d wave equation if and only if $v=rV$ solves the 1-d wave equation.
Setting 
\[u^\veps(t,r):=rU^\veps(t,r),\] 
we obtain that $u^\veps$ solves the corresponding 1-d problem on $\{r>\veps\}$:
\[\text{($\veps$-Half-line)}\qquad
\left\{\begin{array}{ll}
	\square_{1+1}u=0 & \text{on $(0,T)\times\{r>\veps\}$}\\
	u(0,r)=r\vp^\veps(r) & \text{for $r>\veps$}\\
	u_t(0,r)=r\psi^\veps(r) & \text{for $r>\veps$}\\
	u_r(t,\veps)= \frac{1}{\veps}u(t,\veps) & \text{for $t>0$.}
\end{array} \right.\]
Note that the Neumann condition for the 3-d solution corresponds to a Robin 
condition for the 1-d solution. (A direct calculation shows that the initial data 
for $u^\veps$ and $u_t^\veps$ both satisfy this Robin condition.)

The solution $u^\veps$ to the $\veps$-Half-line problem is explicitly 
given via d'Alembert's formula\footnote{One way to solve the 1-dimensional 
Robin IBVP is to first solve the IBVP with general Dirichlet data 
$u^\veps(t,\veps)=h(t)$ along $r=\veps$, for which a d'Alembert formula is 
readily available (see John \cite{bjs}, p.\ 8); one may then identify the $h$ 
which gives $u_r= \frac{1}{\veps}u$ along $r=\veps$.}:
\beq\label{u_eps}
	u^\veps(t,r)=\left\{
	\begin{array}{ll}
		\veps\leq r\leq ct+\veps: & \frac{1}{2}\left[ (ct+r) \vp^\veps(ct+r)
				+(ct-r+2\veps)\vp^\veps(ct-r+2\veps)\right]\\
				& +\frac{1}{2c}\int_{ct-r+2\veps}^{ct+r} s\psi^\veps(s)\, ds \\
				&+e^{\frac{r-ct-2\veps}{\veps}}
				\int_\veps^{ct-r+2\veps}\left[\frac{s\psi^\veps(s)}{c}
				-\frac{s\vp^\veps(s)}{\veps}\right]e^{\frac{s}{\veps}}\, ds\\\\
				r\geq ct+\veps: & \frac{1}{2}\left[ (r+ct) \vp^\veps(r+ct)
				+(r-ct)\vp^\veps(r-ct)\right]\\
				& +\frac{1}{2c}\int_{r-ct}^{r+ct} s\psi^\veps(s)\, ds.
	\end{array}\right.
\eeq
A direct calculation shows that $u^\veps$ is a classical solution on 
$\RR_t\times \{r>\veps\}$. From this we obtain the radial exterior 
Neumann solution $U^\veps(t,r):=\frac{u^\veps(t,r)}{r}$:
\beq\label{U_eps}
	U^\veps(t,r)=\left\{
	\begin{array}{ll}
		\veps\leq r\leq ct+\veps: & \frac{1}{2r}\left[ (ct+r) \vp^\veps(ct+r)
				+(ct-r+2\veps)\vp^\veps(ct-r+2\veps)\right]\\
				& +\frac{1}{2cr}\int_{ct-r+2\veps}^{ct+r} s\psi^\veps(s)\, ds\\
				&+\frac{1}{r}e^{\frac{r-ct-2\veps}{\veps}}
				\int_\veps^{ct-r+2\veps}\left[\frac{s\psi^\veps(s)}{c}
				-\frac{s\vp^\veps(s)}{\veps}\right]
				e^{\frac{s}{\veps}}\, ds\\\\
		r\geq ct+\veps: & \frac{1}{2r}\left[ (r+ct) \vp^\veps(r+ct)
				+(r-ct)\vp^\veps(r-ct)\right]\\
				& +\frac{1}{2cr}\int_{r-ct}^{r+ct} s\psi^\veps(s)\, ds.
	\end{array}
	\right.
\eeq
We finally extend $U^\veps$ at each time to obtain an everywhere 
defined approximation of the Cauchy solution $U(t,x)$. As discussed
earlier we use the natural choice of extending $U^\veps$ continuously 
as a constant on $B_\veps$ at each time:
\beq\label{tilde_U_eps}
	\tilde U^\veps(t,x)=\left\{
	\begin{array}{ll}
		U^\veps(t,\veps) & \text{for $0\leq |x|\leq \veps$}\\\\
		U^\veps(t,x) & \text{for $|x|\geq \veps$.}
	\end{array}
	\right.
\eeq
For later use we record that the value along the boundary is explicitly given as
\beq\label{U_eps_bndry_val}
	U^\veps(t,\veps)=\frac{1}{\veps}(ct+\veps)\vp^\veps(ct+\veps)
	+\frac{1}{\veps}e^{-\frac{ct+\veps}{\veps}}
	\int_\veps^{ct+\veps}\left[\frac{s\psi^\veps(s)}{c}-\frac{s\vp^\veps(s)}{\veps}\right]
	e^{\frac{s}{\veps}}\, ds,
\eeq
and we also note that 
\beq\label{init_approx_data}
	\tilde U^\veps(0,x)=\Phi^\veps(x),\qquad 
	\tilde U_t^\veps(0,x)=\Psi^\veps(x)\qquad\text{for all $x\in\RR^3$.}
\eeq

\section{Comparing Cauchy and exterior Neumann solutions}\label{comp_neu}
The issue now is to estimate the $H^2$-distance
\[\|U(t)-\tilde U^\veps(t)\|_{H^2(\RR^3)}\]
as $\veps\downarrow 0$. As explained in Section \ref{rad_solns}
we prefer to estimate this $H^2$-difference by employing the 
natural energies for the wave equation. 
These energies will majorize the $L^2$-distances of the 
first and second derivatives of $U(t)$ and $\tilde U^\veps(t)$, and  
also provide control of the $L^2$-distance of the functions themselves.

\subsection{Energies}\label{energies}
For any function $W(t,x)$ which is twice weakly differentiable on 
$\RR\times\RR^3$ we define the following 1st and 2nd order energies
(note their domains of integration):
\[\mathcal E_{W}(t):={\textstyle\frac{1}{2}}\int_{\RR^3} 
|\del_t  W(t,x)|^2+c^2|\nabla W(t,x)|^2\, dx,\]
\[\mathcal E^\veps_{W}(t):={\textstyle\frac{1}{2}}\int_{|x|\geq \veps} 
|\del_t  W(t,x)|^2+c^2|\nabla W(t,x)|^2\, dx,\]
and
\[\EE_{W}(t):=\sum_{i=1}^3 \mathcal E_{\del_i W}(t)
=\sum_{i=1}^3{\textstyle\frac{1}{2}}\int_{\RR^3} 
|\del_t \del_i W(t,x)|^2 +c^2|\nabla \del_i W(t,x)|^2\, dx,\]
\[\EE^\veps_{W}(t):=\sum_{i=1}^3 \mathcal E^\veps_{\del_i W}(t)
=\sum_{i=1}^3{\textstyle\frac{1}{2}}\int_{|x|>\veps} 
|\del_t \del_i W(t,x)|^2 +c^2|\nabla \del_i W(t,x)|^2\, dx.\]
The first goal is to estimate the energies
\beq\label{ult_energ_1}
	\mathcal E^\veps(t):=\mathcal E_{U-\ti U^\veps}(t)
\eeq
\beq\label{ult_energ_2}
	\EE^\veps(t):=\EE_{U-\ti U^\veps}(t),
\eeq
which majorizes the $L^2$-distances between the 1st and 2nd 
derivatives of $U$ and $\ti U^\veps$, respectively.
As a first step we observe the following facts.
\begin{lemma}\label{energy_1}
	With $U$ and $U^\veps$ as defined above we have: each of the energies
	\[\mathcal E_{U}(t),\quad \mathcal E^\veps_{U^\veps}(t),\quad
	\mathcal E_{\del_i U}(t),\quad\text{and}\quad\mathcal E^\veps_{\del_i U^\veps}(t)\]
	are constant in time.
\end{lemma}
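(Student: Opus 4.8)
The plan is to obtain all four statements from a single application of the standard energy identity for the wave equation, keeping careful track of the boundary flux produced by each domain of integration. If $W$ solves $\square_{1+3}W=0$ on a spatial domain $\Omega$ (here either $\RR^3$ or the exterior region $\{|x|>\veps\}$), then differentiating its energy under the integral sign, substituting $\del_t^2 W=c^2\Delta W$, and integrating by parts yields
\[\frac{d}{dt}\,\tfrac12\int_\Omega |\del_t W|^2+c^2|\nabla W|^2\,dx=c^2\int_{\del\Omega}\del_t W\,\del_n W\,dS,\]
where $\del_n$ is the outward normal derivative on $\del\Omega$. Thus in each case conservation reduces to showing the boundary flux vanishes. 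Before doing so I would record the regularity and decay that justify these steps: for $\Phi,\Psi\in C^\infty_{c,rad}(\RR^3)$ the Cauchy solution $U$ is smooth and, by finite propagation speed, compactly supported in $x$ at each $t\in[0,T]$; and the explicit formula \eq{U_eps} shows $U^\veps$ is smooth up to $r=\veps$ while the compact support of $\Phi^\veps,\Psi^\veps$ makes $U^\veps(t,\cdot)$ compactly supported for each $t$. In particular every contribution from spatial infinity vanishes.

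The two full-space energies are then immediate. Since $\square_{1+3}$ commutes with $\del_i$, both $U$ and $\del_i U$ solve the wave equation on $\RR^3$, and applying the identity with $\Omega=\RR^3$ (no finite boundary, flux at infinity vanishing by compact support) shows that $\mathcal E_U(t)$ and $\mathcal E_{\del_i U}(t)$ are constant.

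For the exterior first-order energy I would take $\Omega=\{|x|>\veps\}$, whose boundary is the sphere $r=\veps$ with outward normal $-\hat x$, so that $\del_n=-\del_r$. The flux becomes $-c^2\int_{r=\veps}\del_t U^\veps\,\del_r U^\veps\,dS$, which vanishes because $U^\veps$ satisfies the homogeneous Neumann condition $\del_r U^\veps|_{r=\veps}=0$; hence $\mathcal E^\veps_{U^\veps}(t)$ is constant. The genuinely new point is the second-order exterior energy $\mathcal E^\veps_{\del_i U^\veps}(t)$. Here $\del_i U^\veps$ again solves the wave equation, so the same identity applies and gives the flux $-c^2\int_{r=\veps}\del_t(\del_i U^\veps)\,\del_r(\del_i U^\veps)\,dS$, but now $\del_i U^\veps$ does \emph{not} satisfy a Neumann condition. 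To handle it I would insert the radial ansatz $U^\veps(t,x)=U^\veps(t,r)$ and compute the boundary traces $\del_r(\del_i U^\veps)=U^\veps_{rr}\,\tfrac{x_i}{r}$ and $\del_t(\del_i U^\veps)=U^\veps_{rt}\,\tfrac{x_i}{r}$, the latter using that $\del_t U^\veps$ is again radial. The integrand is then $-c^2\,U^\veps_{rr}(t,\veps)\,U^\veps_{rt}(t,\veps)\,\tfrac{x_i^2}{\veps^2}$, and the observation that closes the argument is that differentiating the boundary relation $\del_r U^\veps(t,\veps)\equiv 0$ in time gives $U^\veps_{rt}(t,\veps)=0$. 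Hence the flux vanishes and each $\mathcal E^\veps_{\del_i U^\veps}(t)$ is constant.

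I expect the main obstacle to lie in the exterior second-order energy: one must (a) ensure $U^\veps$ is smooth enough up to $r=\veps$ for the traces $U^\veps_{rr}$ and $U^\veps_{rt}$ to exist and for the divergence theorem to apply to $\del_i U^\veps$, and (b) recognize that it is the time-differentiated Neumann condition $U^\veps_{rt}(t,\veps)=0$, rather than any boundary condition on $\del_i U^\veps$ itself, that annihilates the flux. Once the radial identities for $\del_r\del_i U^\veps$ and $\del_t\del_i U^\veps$ are in hand, the remaining manipulations are routine.
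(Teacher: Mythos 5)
Your proposal is correct and follows essentially the same route as the paper: differentiate each energy in time, use the wave equation and integration by parts to reduce everything to a boundary flux, and kill the flux for $\mathcal E^\veps_{\del_i U^\veps}$ by combining radial symmetry with the time-differentiated Neumann condition. Your identity $\del_t\del_i U^\veps\big|_{r=\veps}=U^\veps_{rt}(t,\veps)\,\tfrac{x_i}{\veps}=0$ is exactly the paper's observation that $U^\veps_{x_it}\equiv 0$ along $|x|=\veps$, so the two arguments coincide in substance.
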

\begin{proof}
	The constancy of the first three energies is standard, while the 
	constancy of $\mathcal E^\veps_{\del_i U^\veps}(t)$ is a consequence 
	of the fact that we consider radial solutions. Indeed, as $U^\veps$
	is radial and satisfies vanishing Neumann conditions along $|x|=\veps$,
	we have that $\nabla U^\veps(t,x)\equiv 0$ along $|x|=\veps$. Thus,
	$U^\veps_{x_it}\equiv 0$ for each $i=1,2,3$ along $|x|=\veps$.
	Differentiating in time, using that $U^\veps$ is a solution of the wave equation,
	and integrating by parts, we therefore have
	\begin{align*}
		\dot{\mathcal E}^\veps_{\del_i U^\veps}(t) 
		&= \int_{|x|>\veps} U^\veps_{x_it}U^\veps_{x_itt}
		+c^2\nabla U^\veps_{x_i}\cdot\nabla U^\veps_{x_it}\, dx\\
		&=c^2\int_{|x|>\veps} U^\veps_{x_it}\Delta U^\veps_{x_i}
		+\nabla U^\veps_{x_i}\cdot\nabla U^\veps_{x_it}\, dx
		= c^2\int_{\del\{|x|>\veps\}}U^\veps_{x_it}
		\frac{\del U^\veps_{x_i}}{\del\nu}\, dS=0.
	\end{align*}
\end{proof}
Next, to estimate $\mathcal E^\veps(t)$, we expand the integrand and use that 
$\nabla \ti U^\veps$ vanishes on $B_\veps$ (by our choice of extension),
to get
\begin{align*}	
	\mathcal E^\veps(t) &= \mathcal E_{U- \ti U^\veps}(t) 
	=\frac{1}{2} \int_{\RR^3} |U_t-\ti U^\veps_t|^2 + c^2|\nabla U-\nabla \ti U^\veps|^2\, dx\\
	&=\mathcal E_U(t)+\mathcal E_{\ti U^\veps}(t) 
	-\int_{\RR^3} U_t\ti U^\veps_t +c^2\nabla U\cdot\nabla \ti U^\veps\, dx\\
	&= \mathcal E_U(t)+\mathcal E^\veps_{U^\veps}(t) 
	+\frac{\text{vol}(B_\veps)}{2} |U^\veps_t(t,\veps)|^2
	-U^\veps_t(t,\veps)\int_{|x|<\veps} U_t(t,x)\, dx\\
	&\quad-\int_{|x|>\veps} U_tU^\veps_t+c^2\nabla U\cdot\nabla U^\veps\, dx.
\end{align*}
Differentiating in time, applying Lemma \ref{energy_1}, integrating by parts,
and using the boundary condition $\del_r U^\veps(t,\veps)\equiv 0$,  
then yield
\[\dot{\mathcal E}^\veps(t) = \frac{d}{dt}\left[\frac{\text{vol}(B_\veps)}{2} |U^\veps_t(t,\veps)|^2
-U^\veps_t(t,\veps)\int_{|x|<\veps} U_t(t,x)\, dx\right]
+c^2\int_{|x|=\veps} U^\veps_t \del_rU\, dS.\]
Integrating back up in time, and recalling that $U^\veps$ and $U$ are radial, we obtain
\begin{align}\label{1_st_energy_diff}
	\mathcal E^\veps(T) &= \mathcal E^\veps(0) 
	+\left[\frac{\text{vol}(B_\veps)}{2} |U^\veps_t(t,\veps)|^2 
	-U^\veps_t(t,\veps)\int_{|x|<\veps} U_t(t,x)\, dx\right]_{t=0}^{t=T}\nn\\
	&\quad + c^2\text{area}(B_\veps)\int_0^T U^\veps_t(t,\veps) \del_rU(t,\veps)\, dt.
\end{align}
Below we shall carefully estimate the terms on the RHS to show that 
$\mathcal E^\veps(T)\to 0$ as $\veps\downarrow 0$.

Before carrying out a similar representation of the 2nd order energy 
difference $\EE^\veps(t)$, we observe how $\mathcal E^\veps(t)$ 
controls the $L^2$-distance between $U$ and $\ti U^\veps$.
Setting
\beq\label{L2_dist}
	\mathcal D^\veps(t):=\frac{1}{2}\int_{\RR^3}|U(t,x)-\ti U^\veps(t,x)|^2\, dx,
\eeq
the Cauchy-Schwarz inequality gives 
\[\dot{\mathcal D}^\veps(t)\leq 2\mathcal D^\veps(t)^\frac{1}{2}
\mathcal E^\veps(t)^\frac{1}{2},\]
such that
\beq\label{1_vs_0_energy}
	\mathcal D^\veps(T)\lea \mathcal D^\veps(0)+\int_0^T \mathcal E^\veps(t)\, dt.
\eeq

We now consider how $\EE^\veps(t)$ changes in time. Arguing as above, using 
Lemma \ref{energy_1} and the fact that $U^\veps_{x_i}\equiv 0$ 
on $B_\veps$, we have
\begin{align}
	\mathcal E_{\del_i U-\del_i \ti U^\veps}(t) &=
	\frac{1}{2}\int_{\RR^3} |U_{x_it}-\ti U^\veps_{x_it}|^2
	+c^2|\nabla U_{x_it}-\nabla \ti U^\veps_{x_it}|^2\, dx\nn\\
	&=\mathcal E_{\del_i U}(t)+\mathcal E_{\del_i \ti U^\veps}(t)
	-\int_{\RR^3} U_{x_i,t}\ti U^\veps_{x_it}+c^2\nabla U_{x_i}
	\cdot\nabla \ti U^\veps_{x_i}\, dx\nn\\
	&=\mathcal E_{\del_i U}(0)+\mathcal E^\veps_{\del_i U^\veps}(0)
	- \int_{|x|>\veps}U_{x_i,t} U^\veps_{x_it}+c^2\nabla U_{x_i}
	\cdot\nabla U^\veps_{x_i}\, dx.
\end{align}
Differentiating in time and integrating by parts in the last integral, give
\beq\label{indiv_term}
	\dot{\mathcal E}_{\del_i U-\del_i \ti U^\veps}(t) 
	=c^2\int_{|x|=\veps}\big(U_{x_it}\big)\big(\del_r U^\veps_{x_i}\big)\, dS.
\eeq
Observing that we have
\[\sum_{i=1}^3U_{x_it}\del_r U^\veps_{x_i}=\big(\del_r U_t\big)\big(\del_{rr}U^\veps\big)\]
along $\{|x|=\veps\}$ (recall that $\del_r U^\veps$ vanishes along $\{|x|=\veps\}$),
we obtain from \eq{indiv_term} that 
\beq\label{2_nd_energy_diff}
	\EE^\veps(T)=\EE^\veps(0)+c^2\text{area}(B_\veps)
	\int_0^T \big(\del_r U_t(t,\veps)\big)\big(\del_{rr}U^\veps(t,\veps)\big)\, dt.
\eeq
To estimate $\mathcal E^\veps(T)$ and $\EE^\veps(T)$, and hence also 
$\mathcal D^\veps(T)$ according to \eq{1_vs_0_energy}, we employ 
the solution formulae \eq{U} and \eq{U_eps}.

\subsection{Initial differences in energy }
The details of estimating the initial differences of the first and second 
order energies, i.e.\ $\mathcal E^\veps(0)$ and $\EE^\veps(0)$, were 
carried out in Section 3.2 of \cite{jt1} (and makes use of 
\eq{init_approx_data}). Translating to our present notation 
we have that 
\beq\label{initial_0th_energy_diff}
	\mathcal D^\veps(0)\lea \veps^2\|\Phi\|_{H^1(B_{2\veps})}^2,
\eeq
\beq\label{initial_1st_energy_diff}
	\mathcal E^\veps(0)\lea \veps^2\|\Psi\|_{H^1(B_{2\veps})}^2
	+\|\Phi\|_{H^1(B_{2\veps})}^2,
\eeq
and 
\beq\label{initial_2nd_energy_diff}
	\EE^\veps(0)\lea \|\Psi\|_{H^1(B_{2\veps})}^2
	+\|\Phi\|_{H^2(B_{2\veps})}^2.
\eeq 

\subsection{Estimating growth of first order energy difference}
According to \eq{1_st_energy_diff}, to estimate $\mathcal E^\veps(T)$
we need to estimate the quantities $U^\veps_t$ and $\del_r U$ along $|x|=\veps$. 
For the remaining term involving $U_t(t,x)$ in \eq{1_st_energy_diff} (for $|x|\leq \veps$), 
it will suffice to employ an energy estimate that does not require formulae.

Before considering these terms in detail we record the following fact. 
For any $k\in \RR$ and for any $t>0$ let 
\[Q_k^\veps(t):=\frac{1}{\veps^k}\left(e^{-\frac{ct+\veps}{\veps}}
\int_\veps^{ct+\veps}\left[\frac{s\psi^\veps(s)}{c}
-\frac{s\vp^\veps(s)}{\veps}\right]e^{\frac{s}{\veps}}\, ds 
+ (ct+\veps)\vp^\veps(ct+\veps)\right);\]
then
\beq\label{Q}
	Q_k^\veps(t)\to 0\qquad\text{as $\veps\downarrow 0$}.
\eeq
To see this, integrate by parts in the $\vp^\veps$-term to get that
\begin{align*}
        Q_k^\veps(t)&=\frac{1}{c}
        \int_\veps^{ct+\veps}\!\!\!\! s\psi^\veps(s)\veps^{-k}e^{\frac{s-ct-\veps}{\veps}}\, ds
        +\vp^\veps(\veps)\veps^{1-k}e^{-\frac{ct}{\veps}}
        +\int_\veps^{ct+\veps}\!\!\!\!\left[\vp^\veps(s)
        +s{\vp^\veps}'(s)\right]\veps^{-k}e^{\frac{s-ct-\veps}{\veps}}\, ds.
\end{align*}
Recalling \eq{approx_neum_data_1}-\eq{approx_neum_data_2} and using that 
$\vp$ and $\psi$ are fixed, smooth functions, the Dominated Convergence Theorem 
yields $Q_k^\veps(t)\to 0$ as $\veps\downarrow 0$.

\subsubsection{Estimating $U^\veps_t(t,\veps)$}
According to \eq{U_eps_bndry_val} we have
\[U^\veps_t(t,\veps)=\frac{c}{\veps}\left(\vp^\veps(ct+\veps)
+(ct+\veps){\vp^\veps}'(ct+\veps)
+\frac{(ct+\veps)}{c}\psi^\veps(ct+\veps)\right)-cQ^\veps_2(t).\]
As $Q^\veps_2(t)$ tends to zero while $\vp^\veps$ and $\psi^\veps$
remain bounded, we conclude that
\beq\label{U^veps_t_bound}
	|U^\veps_t(t,\veps)|\lea \frac{1}{\veps}\qquad\text{for all $t\in[0,T]$ as $\veps\downarrow 0$.}
\eeq

\subsubsection{Estimating $\del_rU(t,\veps)$}
According to \eq{U} we have
\beq\label{U_r}
	\del_rU(t,\veps)=\left\{
	\begin{array}{ll}
		t\geq \frac{\veps}{c}: & -\frac{1}{2\veps^2}\left[ (ct+\veps) \vp(ct+\veps)-(ct-\veps)\vp(ct-\veps)\right]\\\\
					&+\frac{1}{2\veps}\left[\vp(ct+\veps)+ (ct+\veps)\vp'(ct+\veps)+\vp(ct-\veps)
					+(ct-\veps)\vp'(ct-\veps)\right]\\\\
					&-\frac{1}{2c\veps^2}\int_{ct-\veps}^{ct+\veps} s\psi(s)\, ds
					+\frac{1}{2c\veps}\left[(ct+\veps) \psi(ct+\veps)+(ct-\veps)\psi(ct-\veps)\right]\\\\\\
		t\leq \frac{\veps}{c}: & -\frac{1}{2\veps^2}\left[ (\veps+ct) \vp(\veps+ct)+(\veps-ct)\vp(\veps-ct)\right]\\\\
					&+\frac{1}{2\veps}\left[ \vp(\veps+ct) +(\veps+ct)\vp'(\veps+ct)+\vp(\veps-ct)
					+(\veps-ct)\vp'(\veps-ct)\right]\\\\
					& -\frac{1}{2c\veps^2}\int_{\veps-ct}^{\veps+ct} s\psi(s)\, ds
					+\frac{1}{2c\veps}\left[(\veps+ct) \psi(\veps+ct)-(\veps-ct) \psi(\veps-ct)\right].
	\end{array}
	\right.
\eeq
The terms for $t\geq\frac{\veps}{c}$  are estimated by 2nd order 
Taylor expansion of $\vp(ct\pm\veps)$ and $\psi(ct\pm\veps)$
about $\veps=0$. The terms for $t\leq \frac{\veps}{c}$ are estimated 
by 2nd order Taylor expansion of $\vp$ and $\psi$ about zero, 
and then using that $\vp'(0)=\psi'(0)=0$. (As observed earlier, this 
holds since $\vp$ and $\psi$ are profile functions of the {\em smooth}, 
radial functions $\Phi$ and $\Psi$, respectively). 
These expansions are straightforward and we omit them. The end 
result is that the leading order terms in \eq{U_r} cancel, leaving terms 
of size at most $O(\veps)$. We thus have that
\beq\label{U_r_bound}
	|\del_r U(t,\veps)|\lea \veps
	\qquad\text{for all $t\in[0,T]$ as $\veps\downarrow 0$.}
\eeq
(Note: this is actually obvious since we know that $U$ is a smooth, 
radial solution satisfying $\del_r U(t,0)\equiv 0$ and with fixed data 
independent of $\veps$.)

Finally, the Cauchy-Schwarz inequality and Lemma \ref{energy_1} 
give
\[\Big|\int_{|x|<\veps} U_t(t,x)\, dx\Big|
\lea \veps^\frac{3}{2}\mathcal E_U(0)^\frac{1}{2}. \]
Applying this together with \eq{U^veps_t_bound} and \eq{U_r_bound} 
in \eq{1_st_energy_diff}, we conclude that
\beq\label{1_st_energy_diff_1}
	\mathcal E^\veps(T) \lea \mathcal E^\veps(0) +\veps^\frac{1}{2}.
\eeq
%

\subsection{Estimating growth of second order energy differences}
Next, according to \eq{2_nd_energy_diff}, to estimate $\EE^\veps(T)$, 
we need to estimate the quantities $\del_rU_t$ and $\del_{rr} U^\veps$ 
along $|x|=\veps$.

\subsubsection{Estimating $\del_rU_t(t,\veps)$}
By taking the time derivative of \eq{U_r} and then Taylor expanding 
the various terms as outlined above, we deduce that 
\beq\label{U^_rt_bound}
	|\del_r U_t(t,\veps)|\lea \veps\qquad\text{for all $t\in[0,T]$ as $\veps\downarrow 0$.}
\eeq
\subsubsection{Estimating $\del_{rr}U^\veps(t,\veps)$}
This estimate again requires a direct, but rather long, calculation 
(which we omit), followed by a careful analysis of the resulting expression.

The first step is to calculate $\del_{rr}U^\veps(t,r)$ for $\veps\leq r\leq ct+\veps$, 
by using the first part of formula \eq{U_eps}. A number of cancellations occur 
when the resulting expression is evaluated at $r=\veps$, and we are left with
\begin{align*}
	\del_{rr}U^\veps(t,\veps) = Q^\veps_3(t)
	&-\frac{1}{\veps^2}\left[\vp^\veps+(ct-\veps){\vp^\veps}'
	-\veps(ct+\veps){\vp^\veps}''\right]
	-\frac{1}{c\veps^2}\left[ct\psi^\veps-\veps(ct+\veps){\psi^\veps}'\right],
\end{align*}
where $\vp^\veps$, $\psi^\veps$, and their derivatives are evaluated at $ct+\veps$.
According to \eq{approx_neum_data_1}-\eq{approx_neum_data_2} we have that 
$\vp^\veps$, $\psi^\veps$, and their first derivatives remain bounded independently
of $\veps$, while ${\vp^\veps}''$ is at most of order $\frac{1}{\veps}$.
Since $Q^\veps_3(t)\to 0$ as $\veps\downarrow 0$ by \eq{Q}, we therefore have that 
\beq\label{U^eps_rr_bound}
	|\del_{rr}U^\veps(t,\veps)|\lea \frac{1}{\veps^2}
	\qquad\text{for all $t\in[0,T]$ as $\veps\downarrow 0$.}
\eeq
Finally, by using \eq{U^_rt_bound} and \eq{U^eps_rr_bound} in 
\eq{2_nd_energy_diff}, we conclude that
\beq\label{2_nd_energy_diff_1}
	\EE^\veps(T) \lea \EE^\veps(0) +\veps.
\eeq

\subsection{Convergence of exterior Neumann solutions}
According to the definitions of $\mathcal D^\veps(t)$, $\mathcal E^\veps(t)$, and
$\EE^\veps(t)$, together with the estimates \eq{1_vs_0_energy}, 
\eq{1_st_energy_diff_1}, \eq{2_nd_energy_diff_1} we have
\begin{align*}
	\|U(t)-\ti U^\veps(t)\|_{H^2(\RR^3)}^2
	&\lea \mathcal D^\veps(t) + \mathcal E^\veps(t)+\EE^\veps(t)\\
	&\lea \mathcal D^\veps(0)+\mathcal E^\veps(0)+\EE^\veps(0)
	 +\veps^\frac{1}{2},
\end{align*}
at any time $t\in[0,T]$.
Applying the bounds \eq{initial_0th_energy_diff}, and \eq{initial_1st_energy_diff},
\eq{initial_2nd_energy_diff}, we conclude that the (extended) Neumann solutions
$\ti U^\veps(t)$ converge to the Cauchy solution $U(t)$ in $H^2(\RR^3)$, 
uniformly on bounded time intervals, as $\veps\downarrow 0$. Thanks to Proposition 
\ref{smooth_case}, this concludes
the proof of part (i) of Theorem \ref{main_result}.

\section{Exterior Dirichlet solutions}\label{gen_dir_soln}
In this and the next section $U^\veps$ refers to the exterior Dirichlet 
solutions; similarly for their extensions $\ti U^\veps(t,x)$.

For fixed initial data $\Phi,\, \Psi\in C^\infty_{c,\, rad}(\RR^3)$ and any 
$\veps>0$ we shall derive a formula for the exterior, radial Dirichlet 
solution $U^\veps(t,x)$, defined for $|x|\geq\veps$ and satisfying 
$U^\veps|_{r=\veps}=0$.  We refer to $U^\veps$ as the {\em exterior 
Dirichlet solution} corresponding to the solution $U$ of (CP) with data 
$\Phi$, $\Psi$. In Section \ref{comp_dir} we will then estimate how it 
(really, its extension $\tilde U^\veps(t,x)$ to all of $\RR^3$) approximates 
the solution $U(t)$ in $H^1(\RR^3)$ at fixed times.

To generate the exterior Dirichlet solution $U^\veps(t,x)$ and its 
extension we use the following scheme. 
To smoothly approximate the original data $(\Phi,\Psi)$ with exterior 
Dirichlet data we fix a smooth, nondecreasing cutoff function 
$\chi:\RR_0^+\to\RR_0^+$ with
\beq\label{eta_props}
	\chi\equiv 0 \quad\text{on $[0,1]$,}
	\quad \chi\equiv 1\quad\text{on $[2,\infty)$.}
\eeq
Then, with $\vp$ and $\psi$ as in \eq{rad_versns} we define
\beq\label{approx_dir_data_1}
	\Phi^\veps(x)\equiv \vp^\veps(|x|)
	:=\chi\big(\textstyle\frac{|x|}{\veps}\big)\vp(|x|)
\eeq
and
\beq\label{approx_dir_data_2}
	\Psi^\veps(x)\equiv \psi^\veps(|x|)
	:=\chi\big(\textstyle\frac{|x|}{\veps}\big)\psi(|x|).
\eeq
We refer to $(\Phi^\veps,\Psi^\veps)$ as the {\em Dirichlet data} 
corresponding to the original Cauchy data $(\Phi,\Psi)$ for (CP).
Note that the Dirichlet data are actually defined on all of $\RR^3$, 
that they vanish identically on $B_\veps$, and hence their restrictions 
to the exterior domain $\{x\in\RR^3\,:\, |x|\geq\veps\}$  
satisfy homogeneous Dirichlet conditions along $|x|=\veps$.

The exterior Dirichlet solution $U^\veps(t,x)$ is then the unique radial 
solution of the initial-boundary value problem
\[\left\{\begin{array}{ll}
	\square_{1+3}V =0 & \text{on $(0,T)\times\{|x|>\veps\}$}\\
	V(0,x)=\Phi^\veps(x) & \text{for $|x|>\veps$}\\
	V_t(0,x)=\Psi^\veps(x) & \text{for $|x|>\veps$}\\
	V(t,x)=0 & \text{along $|x|=\veps$ for $t>0$.}
\end{array} \right.\]

We next record the solution formula for the exterior, radial Dirichlet solution 
$U^\veps(t,r)$ (which is simpler to derive than the formula for the 
exterior Neumann solution):
\beq\label{U_eps_dir}
	U^\veps(t,r)=\left\{
	\begin{array}{ll}
		\veps\leq r\leq ct+\veps: & \frac{1}{2r}\left[ (ct+r) \vp^\veps(ct+r)
					-(ct-r+2\veps)\vp^\veps(ct-r+2\veps)\right]\\
				& +\frac{1}{2cr}\int_{ct-r+2\veps}^{ct+r} s\psi^\veps(s)\, ds\\\\
		r\geq ct+\veps: & \frac{1}{2r}\left[ (r+ct) \vp^\veps(r+ct)+(r-ct)\vp^\veps(r-ct)\right]\\
					& +\frac{1}{2cr}\int_{r-ct}^{r+ct} s\psi^\veps(s)\, ds.
	\end{array}
	\right.
\eeq
We finally extend $U^\veps$ at each time to obtain an everywhere defined approximation of the 
Cauchy solution $U$. The natural choice is to extend $U^\veps$ continuously as zero
on $B_\veps$ at each time:
\beq\label{tilde_U_eps_dir}
	\tilde U^\veps(t,x)=\left\{
	\begin{array}{ll}
		0 & \text{for $0\leq |x|\leq \veps$}\\\\
		U^\veps(t,x) & \text{for $|x|\geq \veps$.}
	\end{array}
	\right.
\eeq
We note that 
\beq\label{initial_approx_N_data}
	\ti U^\veps(0,x)=\Phi^\veps(x),\qquad \ti U_t^\veps(0,x)=\Psi^\veps(x)
	\qquad\text{for all $x\in\RR^3$.}
\eeq

\section{Comparing the Cauchy and exterior Dirichlet solutions}\label{comp_dir}
We proceed to estimating the $H^1$-distance
\[\|U(t)-\tilde U^\veps(t)\|_{H^1(\RR^3)},\]
and show that it vanishes as $\veps\downarrow 0$. 
As for exterior Neumann solutions we prefer to estimate this  
difference by estimating the first order energy 
\[\mathcal E^\veps(t)=\mathcal E_{U-\ti U^\veps}(t)\]
as defined in \eq{ult_energ_1}. This energy bounds the $L^2$-norm
of the gradient of the difference $U-\tilde U^\veps$, and it also 
controls the $L^2$-norm of $U-\tilde U^\veps$ itself. 
The calculations for these estimates are similar to the 
ones for the Neumann case in Section \ref{energies}, and will only be outlined. 

First, a direct calculation similar to what was done above (using that the
energies $\mathcal E_U(t)$ and $\mathcal E^\veps_{U^\veps}(t)$ are both 
conserved in time), shows that
\[\mathcal E^\veps(t)=\mathcal E^\veps_{U^\veps}(0)+\mathcal E_U(0)
-\int_{|x|>\veps}U_tU^\veps_t+c^2\nabla U^\veps\cdot\nabla  U\, dx.\]
Differentiating with respect to time, integrating by parts, and applying the 
Dirichlet condition for $U^\veps$ yield
\beq\label{E_dot}
	\dot{\mathcal E}^\veps(t)=c^2\int_{|x|=\veps}
	U_t(t,\veps)\del_r U^\veps(t,\veps)\, dS.
\eeq
Also, with $\mathcal D^\veps(t)$ defined as in \eq{L2_dist}, we have that 
\eq{1_vs_0_energy} holds also in the case of Dirichlet boundary conditions.

To estimate the $H^1$-distance between the Cauchy 
solution $U(t)$ and the exterior Dirichlet solution $\tilde U^\veps(t)$,
we proceed to provide bounds for the initial terms $\mathcal D^\veps(0)$ and 
$\mathcal E^\veps(0)$, as well as for the surface integral in \eq{E_dot}.
It is immediate to verify that  
\[\mathcal D^\veps(0)\lea \|\Phi^\veps
-\Phi\|_{L^2(\RR^3)}^2\leq \|\Phi\|_{L^2(B_{2\veps})}^2,\] 
and similarly that 
\[\mathcal E^\veps(0) \lea \|\Psi\|_{L^2(B_{2\veps})}^2
+\|\nabla \Phi^\veps-\nabla\Phi\|_{L^2(\RR^3)}^2.\]
To bound the last term we recall the definition of $\Phi^\veps$ 
in \eq{approx_dir_data_1} to calculate that
\begin{align}
	\|\nabla \Phi^\veps-\nabla\Phi\|_{L^2(\RR^3)}^2 
	&\lea \int_{|x|<2\veps}|\nabla \Phi|^2\, dx 
	+ \frac{1}{\veps^2}\int_{\veps<|x|<2\veps}|\Phi(x)|^2\, dx \nn\\
	&\lea \|\nabla \Phi\|_{L^2(B_{2\veps})}^2 
	+ \int_{\veps<|x|<2\veps}\frac{|\Phi(x)|^2}{|x|^2}\, dx.\label{intrmed_energy}
\end{align}
As $\Phi$ belongs to $H^1(\RR^3)$, Hardy's inequality (as formulated in 
Lemma 17.1 in \cite{tar}) shows that $\frac{|\Phi(x)|^2}{|x|^2}$ belongs to
$L^1(\RR^3)$, so that the Dominated Convergence Theorem yields
\[\|\nabla \Phi^\veps-\nabla\Phi\|_{L^2(\RR^3)}^2 \to0 \qquad\text{as $\veps\downarrow 0$.}\]
%
%
We have thus established that 
\beq\label{init_enegies}
	\mathcal D^\veps(0)\to 0\qquad\text{and}\qquad \mathcal E^\veps(0) 
	\to 0 \qquad\text{as $\veps\downarrow 0$.}
\eeq

\subsubsection{Estimating $\del_tU(t,\veps)$}
According to \eq{U} we have
\beq\label{U_t}
	\del_tU(t,\veps)=\left\{
	\begin{array}{ll}
		t\geq \frac{\veps}{c}: &\frac{c}{2\veps}\left[\vp(ct+\veps)+ (ct+\veps)\vp'(ct+\veps)-\vp(ct-\veps)
					-(ct-\veps)\vp'(ct-\veps)\right]\\\\
					&\frac{1}{2\veps}\left[(ct+\veps)\psi(ct+\veps)-(ct-\veps)\psi(ct-\veps)\right]\\\\\\
		t\leq \frac{\veps}{c}: &\frac{c}{2\veps}\left[ \vp(\veps+ct) +(\veps+ct)\vp'(\veps+ct)-\vp(\veps-ct)
					-(\veps-ct)\vp'(\veps-ct)\right]\\\\
					&+\frac{1}{2\veps}\left[(\veps+ct) \psi(\veps+ct)+(\veps-ct) \psi(\veps-ct)\right].
	\end{array}
	\right.
\eeq
We estimate the terms for $t\geq\frac{\veps}{c}$  by 2nd order 
Taylor expansion of $\vp(ct\pm\veps)$ and $\psi(ct\pm\veps)$
about $\veps=0$. The terms for $t\leq \frac{\veps}{c}$ are estimated 
by 2nd order Taylor expansion of $\vp$ and $\psi$ about zero, 
and then using that $\vp'(0)=\psi'(0)=0$. 
These expansions are straightforward and are omitted. The 
result is that the leading order term in \eq{U_t} for all times is $O(1)$. 
We thus have that
\beq\label{U_t_bound}
	|U_t(t,\veps)|\lea 1
	\qquad\text{for all $t\in[0,T]$ as $\veps\downarrow 0$.}
\eeq

\subsubsection{Estimating $\del_{r}U^\veps(t,\veps)$}
We first calculate $\del_{r}U^\veps(t,r)$ for $\veps\leq r\leq ct+\veps$
by using the first part of formula \eq{U_eps_dir}. Evaluating at $r=\veps$
gives that 
\begin{align*}
	\del_{r}U^\veps(t,\veps) = 
	\frac{1}{\veps}\left[\vp^\veps(ct+\veps)+(ct+\veps){\vp^\veps}'(ct+\veps)\right]
	+\frac{(ct+\veps)}{c\veps}{\psi^\veps}(ct+\veps).
\end{align*}
Recalling the definitions of $\vp^\veps$ and $\psi^\veps$ in 
\eq{approx_dir_data_1}-\eq{approx_dir_data_2}, and 
splitting the calculations into $t\gtrless \frac{\veps}{c}$, we obtain that
\beq\label{U^eps_r_bound}
	|\del_{r}U^\veps(t,\veps)|\lea \frac{1}{\veps}
	\qquad\text{for all $t\in[0,T]$ as $\veps\downarrow 0$.}
\eeq

\subsection{Convergence of exterior Dirichlet solutions}
By using \eq{U_t_bound} and \eq{U^eps_r_bound} in \eq{E_dot} we obtain that
\[|\dot{\mathcal E}^\veps(t)|\lea \veps \qquad\text{for all $t\in[0,T]$ as 
$\veps\downarrow 0$,}\]
such that \eq{init_enegies}${}_2$ gives 
\[\mathcal E^\veps(t)\to 0 \qquad\text{uniformly for $t\in[0,T]$ as 
$\veps\downarrow 0$.}\]
Finally, recalling that \eq{1_vs_0_energy} also holds in the Dirichlet case,
we have that \eq{init_enegies}${}_1$ yields
\[\mathcal D^\veps(t)\to 0 \qquad\text{uniformly for $t\in[0,T]$ as 
$\veps\downarrow 0$,}\]
as well. We thus conclude that 
\[\|U(t)-\tilde U^\veps(t)\|_{H^1(\RR^3)}\lea \mathcal D^\veps(t)+
\mathcal E^\veps(t)\to 0 \qquad\text{uniformly for $t\in[0,T]$ as 
$\veps\downarrow 0$.}\]
Thanks to Proposition \ref{smooth_case}, this concludes
the proof of part (ii) of Theorem \ref{main_result}.

\begin{bibdiv}
\begin{biblist}
\bib{bjs}{book}{
   author={Bers, Lipman},
   author={John, Fritz},
   author={Schechter, Martin},
   title={Partial differential equations},
   note={With supplements by Lars G\.arding and A. N. Milgram;
   With a preface by A. S. Householder;
   Reprint of the 1964 original;
   Lectures in Applied Mathematics, 3{\rm A}},
   publisher={American Mathematical Society, Providence, R.I.},
   date={1979},
   pages={xiii+343},
   isbn={0-8218-0049-3},
   review={\MR{598466}},
}
\bib{jt1}{article}{
   author={Jenssen, Helge Kristian},
   author={Tsikkou, Charis},
   title={Radial solutions to the Cauchy problem for $\square_{1+3}U=0$
   as limits of exterior solutions},
   journal={Submitted, available at https://arxiv.org/abs/1512.02297},
   date={2015},
}
\bib{ra}{book}{
   author={Rauch, Jeffrey},
   title={Partial differential equations},
   series={Graduate Texts in Mathematics},
   volume={128},
   publisher={Springer-Verlag, New York},
   date={1991},
   pages={x+263},
   isbn={0-387-97472-5},
   review={\MR{1223093 (94e:35002)}},
   doi={10.1007/978-1-4612-0953-9},
}
\bib{sel}{book}{
   author={Selberg, Sigmund},
   title={Lecture Notes, Math 632, PDE},
   publisher={Johns Hopkins University},
   date={2001},
}
\bib{tar}{book}{
   author={Tartar, Luc},
   title={An introduction to Sobolev spaces and interpolation spaces},
   series={Lecture Notes of the Unione Matematica Italiana},
   volume={3},
   publisher={Springer, Berlin; UMI, Bologna},
   date={2007},
   pages={xxvi+218},
   isbn={978-3-540-71482-8},
   isbn={3-540-71482-0},
   review={\MR{2328004 (2008g:46055)}},
}

\end{biblist}
\end{bibdiv}

\end{document}